\documentclass[12pt]{article}
\usepackage{amsmath,amsthm,amssymb, color}
\voffset-2.5 cm
\hoffset -1.5 cm
\textwidth 16.5 cm
\textheight 23 cm
\thispagestyle{empty}
\oddsidemargin 15mm
\evensidemargin 10mm
\input amssym.def
\input amssym.tex
\date{}

\newtheorem{theorem}{THEOREM}

\newtheorem{remark}{Remark}
\newtheorem{problem}{Problem}

\begin{document}

\title{{\bf $L_p +L_{\infty}$ and $L_p \cap L_{\infty}$ are not isomorphic  \newline for all $1 \leq p < \infty$, $p \neq 2$}}
\author {Sergei V. Astashkin\thanks{Research partially supported by
the Ministry of Education and Science of the Russian Federation.} \, and Lech Maligranda}

\date{}

\maketitle

\renewcommand{\thefootnote}{\fnsymbol{footnote}}

\footnotetext[0]{2010 {\it Mathematics Subject Classification}:
46E30, 46B20, 46B42}
\footnotetext[0]{{\it Key words and phrases}: symmetric spaces, isomorphic spaces, complemented subspaces}

\vspace{-5mm}

\begin{abstract}
\noindent {We prove the result stated in the title. It comes as a consequence of the fact that the space 
$L_p \cap L_{\infty}$, $1\le p<\infty$, $p\ne 2$, does not contain a complemented subspace 
isomorphic to $L_p$. In particular, as a subproduct, we show that $L_p \cap L_{\infty}$ contains 
a complemented subspace isomorphic to $l_2$ if and only if $p = 2$. }
\end{abstract}

\section{Preliminaries and main result}

Isomorphic classification of symmetric spaces is an important problem related to the study
of symmetric structures in arbitrary Banach spaces. This research was initiated in the seminal work
of Johnson, Maurey, Schechtman and Tzafriri \cite{JMST}. Somewhat later it was extended by Kalton to 
lattice structures \cite{Ka}. 

In particular, in \cite{JMST} (see also \cite[Section 2.f]{LT}) it was 
shown that the space $L_2 \cap L_p$ for $2 \leq p < \infty$ (resp. $L_2+L_p$ for $1 < p \leq 2$) is isomorphic to $L_p$. 
A detailed investigation of various properties of {\it separable} sums and intersections of $L_p$-spaces (i.e., with $p<\infty$) was undertaken by Dilworth
in the papers \cite{Di88} and \cite{Di90}. In contrast to that, we focus here on the problem if the {\it nonseparable} spaces 
$L_p +L_{\infty}$ and $L_p \cap L_{\infty}$, $1\le p<\infty$, are isomorphic or not.
 
In this paper we use the standard notation from the theory of symmetric spaces (cf. \cite{BS}, \cite{KPS} and \cite{LT}). 
For $1 \leq p < \infty$ the space $L_p +L_{\infty}$ consists of all sums of $p$-integrable and bounded 
measurable functions on $(0, \infty)$ with the norm defined by
\begin{equation*}
\| x \|_{L_p + L_{\infty}}: = \inf_{x(t) = u(t) + v(t), u \in L_p, v \in L_{\infty}} \left( \| u \|_{L_p} + \| v \|_{L_{\infty}} \right).
\end{equation*}
The  $L_p \cap L_{\infty}$ consists of all bounded $p$-integrable functions on $(0, \infty)$ with the norm
\begin{equation*}
\| x \|_{L_p \cap L_{\infty}}: = \max \left \{ \| x \|_{L_p}, \| x \|_{L_{\infty}} \right \} =  \max \left \{ \Big(\int_0^{\infty} | x(t)|^p\, dt\Big)^{1/p}, 
\underset{t > 0}{\rm ess \,sup} | x(t)| \right \}.
\end{equation*}
Both $L_p +L_{\infty}$ and $L_p \cap L_{\infty}$ for all $1 \leq p < \infty$ are non-separable Banach spaces 
(cf. \cite[p. 79]{KPS} for $p = 1$).
The norm in $L_p +L_{\infty}$ satisfies the following sharp estimates
\begin{equation} \label{1}
\Big(\int_0^1 x^*(t)^p \, dt\Big)^{1/p} \leq \| x \|_{L_p + L_{\infty}} \leq 2^{1-1/p} \Big(\int_0^1 x^*(t)^p \, dt\Big)^{1/p} 
\end{equation}
(cf. \cite[p. 109]{BL}, \cite[p. 176]{Ma84} and with details in \cite[Theorem 1]{Ma13}) -- see also \cite[pp. 74-75]{BS} 
and \cite[p. 78]{KPS}, where we can find a proof of (\ref{1}) in the case when $p = 1$, that is, 
$$
\| x \|_{L_1 + L_{\infty}} = \int_0^1 x^*(t) \, dt.
$$
Here, $x^*(t)$ denotes the decreasing rearrangement of $| x(u)|$, that is, 
\begin{equation*}
 x^*(t) = \inf \{\tau>0 \colon m(\{u > 0 \colon |x(u)| > \tau\}) < t \} 
\end{equation*}
(if $E\subset\mathbb{R}$ is a measurable set, then $m(E)$ is its Lebesgue measure). Note that every 
measurable function and its decreasing rearrangement are equimeasurable, that is,
$$
m(\{u > 0 \colon |x(u)| > \tau\})=m(\{t > 0 \colon |x^*(t)| > \tau\})$$
for all $\tau>0$.

Denote by $L_{\infty}^0$ and $(L_p +L_{\infty})^0, 1 \leq p < \infty$, the closure of $L_1 \cap L_{\infty}$ in $L_{\infty}$ 
and in $L_p +L_{\infty}$, respectively. Clearly, $(L_p +L_{\infty})^0 = L_p+L_{\infty}^0$. Note that 
\begin{equation} \label{2}
L_p + L_{\infty}^0 = \{x \in L_p +L_{\infty}\colon x^*(t) \rightarrow 0 ~ {\rm as} ~ t \rightarrow \infty\}
\end{equation}
and 
\begin{equation*}
(L_1 + L_{\infty}^0)^* =  L_1 \cap L_{\infty},
\end{equation*}
i.e., $L_1 \cap L_{\infty}, 1 \leq p < \infty$, is a dual space (cf. \cite[pp. 79-80]{KPS} and \cite[pp. 76-77]{BS}). 
Also, $L_p \cap L_{\infty}$ and $L_p + L_{\infty}, 1 < p < \infty$, are dual spaces because
\begin{equation*}
(L_q + L_1)^* = L_p \cap L_{\infty} ~~{\rm and} ~~(L_q \cap L_1)^* = L_p + L_{\infty}, 
\end{equation*}
where $1/p +1 /q = 1$.
\vspace{2mm}

Now, we state the main result of this paper.

\begin{theorem} \label{Thm1}
For every $1 \leq p < \infty, p \neq 2$, the spaces $L_p +L_{\infty}$ and $L_p \cap L_{\infty}$ are not isomorphic.
\end{theorem}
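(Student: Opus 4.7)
The strategy follows the abstract: it suffices to show that $L_p + L_{\infty}$ contains a complemented subspace isomorphic to $L_p$, while $L_p \cap L_{\infty}$ does not. The first assertion is immediate from (\ref{1}). Multiplication by $\chi_{(0,1)}$ is a contractive projection on $L_p + L_{\infty}$: splitting $x = u + v$ with $u \in L_p$, $v \in L_{\infty}$ gives $x\chi_{(0,1)} = u\chi_{(0,1)} + v\chi_{(0,1)}$ with both summand norms dominated by the originals. The range consists of functions supported on $(0,1)$, and by (\ref{1}) the $L_p+L_\infty$-norm is equivalent to the $L_p$-norm there, so the range is isomorphic to $L_p$.

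For the harder direction, my plan is to reduce first to a statement about $\ell_2$. For $1 < p < \infty$, the closed span of the Rademacher functions is complemented in $L_p$ (via conditional expectation onto the Rademacher $\sigma$-algebra) and isomorphic to $\ell_2$ by Khintchine's inequality, so a complemented $L_p$-subspace of $L_p \cap L_{\infty}$ would automatically yield a complemented $\ell_2$. The task then becomes the ``subproduct'' announced in the abstract: for $p \ne 2$, show that $L_p \cap L_{\infty}$ has no complemented subspace isomorphic to $\ell_2$. The case $p = 1$ has to be treated separately, since the Rademacher span is not complemented in $L_1$; here I would exploit that $L_1$ is weakly sequentially complete, whereas $L_1 \cap L_{\infty}$ contains an isometric copy of $\ell_\infty$ (realized by bounded sequences supported on pairwise disjoint sets of unit measure) and therefore fails this property, which one can leverage to obstruct the existence of a complemented $L_1$-copy directly.

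The central obstacle is the no-complemented-$\ell_2$ assertion. I would tackle it via a gliding-hump dichotomy applied to a putative $\ell_2$-basic sequence $(x_n)$ whose closed span is the range of a bounded projection $P$. After passing to a subsequence, either (a) $\|x_n\|_{L_{\infty}}$ stays bounded below, in which case a disjointification argument lets one reduce to a sequence spanning an essentially $\ell_\infty$-type block structure, in which $\ell_2$ cannot be the image of a bounded projection; or (b) $\|x_n\|_{L_{\infty}} \to 0$, so the $\ell_2$-equivalence is witnessed entirely by the $L_p$-norm, and by composing $P$ with a natural map into $L_p$ one produces a complemented $\ell_2$ inside $L_p$ whose generators have vanishing $L_\infty$-norm --- a configuration rulable out by Kadec--Pe{\l}czy\'nski-type dichotomies that separate $\ell_p$- from $\ell_2$-behavior. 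The delicate technical point, which I expect to be the main source of difficulty, is carrying out the disjointification cleanly with respect to both norms simultaneously and verifying in case (b) that the projection transfers through the inclusion $L_p \cap L_{\infty} \hookrightarrow L_p$ without collapsing.
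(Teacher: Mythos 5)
Your reduction for $1<p<\infty$, $p\ne 2$, is the same as the paper's: $L_p+L_\infty$ contains a complemented copy of $L_p$, hence of $\ell_2$ (Rademachers), so it suffices to show $L_p\cap L_\infty$ has no complemented copy of $\ell_2$. But your proposed proof of that central assertion breaks down in case (b). Complemented copies of $\ell_2$ inside $L_p$ \emph{exist} for every $1<p<\infty$ (again, the Rademacher span), so ``producing a complemented $\ell_2$ inside $L_p$'' is not a contradiction, and no Kadec--Pe{\l}czy\'nski-type dichotomy rules it out; nor does $\|x_n\|_{L_\infty}\to 0$ imply that the $L_p$-norm alone witnesses the lower $\ell_2$-estimate for all coefficient vectors. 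The actual obstruction is of a different nature and your dichotomy (on $\|x_n\|_{L_\infty}$) does not locate it. The paper first invokes Grothendieck's theorem ($L_1$- and $L_\infty$-norms cannot be equivalent on an infinite-dimensional subspace of $L_\infty[0,a]$) to extract a normalized sequence in the span whose restrictions to every $[0,a]$ tend to $0$ in measure; then a delicate two-norm disjointification splits $g_k=u_k+v_k$ with the $v_k$ pairwise disjoint and the $u_k$ all supported in a set $E$ of measure at most $1$. Disjoint sequences in $L_p\cap L_\infty$ generate $\max(\ell_p,c_0)$-type structures, which cannot carry the $\ell_2$-lower estimate when $p\ne 2$; hence the $u_k$ must span a complemented $\ell_2$ inside $L_p\cap L_\infty(E)=L_\infty(E)$, contradicting the fact that $L_\infty$ has no complemented infinite-dimensional reflexive subspace. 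Your case (a) gestures at this last step but omits the measure-theoretic localization that makes it work.

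The $p=1$ case is a second genuine gap. Weak sequential completeness gives no obstruction: a space containing a complemented copy of $L_1$ need not be weakly sequentially complete (consider $L_1\oplus\ell_\infty$), so the presence of $\ell_\infty$ inside $L_1\cap L_\infty$ proves nothing about complemented copies of $L_1$. The paper instead uses the Hagler--Stegall theorem: $X^*$ contains a complemented copy of $L_1$ if and only if $X$ contains a copy of $(\bigoplus_{n}\ell_\infty^n)_{\ell_1}$. Since $L_1\cap L_\infty=(L_1+L_\infty^0)^*$, the problem becomes showing that the separable space $L_1+L_\infty^0$ contains no copy of $(\bigoplus_n\ell_\infty^n)_{\ell_1}$, which is Theorem 3 of the paper and requires a substantial combinatorial argument (a Khintchine-inequality cardinality bound plus another disjointification). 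Neither of these two ingredients --- Hagler--Stegall and the Grothendieck/disjointification scheme --- appears in your outline, and without them the proposal does not close.
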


Clearly, the space $L_p +L_{\infty}$ contains the complemented subspace $(L_p +L_{\infty})_{{\big |} [0, 1]}$ isomorphic to $L_p[0, 1]$
for every $1 \leq p < \infty$. As a bounded projection we can take the operator $Px: = x \chi_{[0, 1]}$ because 
$$
\| Px \|_{L_p} = \| x \chi_{[0, 1]} \|_{L_p} = \Big(\int_0^1 |x(t)|^p \, dt\Big)^{1/p} \leq  \Big(\int_0^1 x^*(t)^p \, dt\Big)^{1/p} \leq \| x \|_{L_p +L_{\infty}}.
$$
In the next two sections we show that $L_p \cap L_{\infty}$ for $p \in [1, 2) \cup (2, \infty)$ does not contain a complemented subspace 
isomorphic to $L_p$, which gives our claim.
At the same time, note that $L_p \cap L_{\infty}$, $1\le p<\infty$, contains a subspace isomorphic to $L_{\infty}$ and hence a subspace  
isomorphic to $L_p$.

The spaces $L_p +L_{\infty}$ and $L_{\infty}$ are not isomorphic since $L_p +L_{\infty}$ contains a complemented subspace 
isomorphic to $L_{p}$ and $L_{\infty}$ is a prime space (this follows from the Lindenstrauss and Pe{\l}czy\'nski results -- see 
\cite[Theorems 5.6.5 and 4.3.10]{AK}). Similarly, the spaces $L_p \cap L_{\infty}$ and $L_{\infty}$ are not isomorphic because 
of $L_p \cap L_{\infty}$ contains a complemented subspace isomorphic to $l_p$ (take, for instance, the span of the sequence 
$\{\chi_{[n-1, n)} \}_{n=1}^{\infty}$ in $L_p \cap L_{\infty}$).

If $\{x_n\}_{n=1}^\infty$ is a sequence from a Banach space $X$, by $[x_n]$ we denote its closed linear span in $X$. As usual,
the Rademacher functions on $[0, 1]$ are defined as follows: $r_k(t) = {\rm sign}
(\sin 2^k \pi t), ~ k \in {\Bbb N}, t \in [0, 1]$.

\section{ $L_1 \cap L_{\infty}$ does not contain a complemented subspace isomorphic to $L_1$}

Our proof of Theorem \ref{Thm1} in the case $p = 1$ will be based on an application of the Hagler-Stegall theorem proved in \cite{HS} 
(see Theorem 1). To state it we need the following definition.

The space $(\bigoplus_{n=1}^{\infty} l_\infty^n)_{l_p}$, $1 \leq p < \infty$, is the Banach space of all sequences $\{c^n_k\}_{n=1}^{\infty}$,   
$(c^n_k)_{k=1}^n \in l_\infty^n$, $n=1,2,\dots,$ such that
$$
\| \{c^n_k \} \| := \Big(\sum_{n = 1}^{\infty} \| (c^n_k)_{k=1}^n \|_{l_\infty}^p\Big)^{1/p} = \Big(\sum_{n = 1}^{\infty} \max_{1 \leq k \leq n} |c^n_k |^p\Big)^{1/p} < \infty.
$$

\begin{theorem}[Hagler-Stegall] \label{Thm2} Let $X$ be a Banach space. Then its dual $X^*$ contains a complemented subspace 
isomorphic to $L_1$ if and only if $X$ contains a subspace isomorphic to $(\bigoplus_{n=1}^{\infty} l_\infty^n)_{l_1}$.
\end{theorem}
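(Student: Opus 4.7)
The plan is to prove both implications by passing through the duality
\[
\Bigl(\bigoplus_{n=1}^\infty l_\infty^n\Bigr)_{l_1}^{*} \;\cong\; \Bigl(\bigoplus_{n=1}^\infty l_1^n\Bigr)_{l_\infty},
\]
together with the fact that $(\bigoplus_n l_1^n)_{l_\infty}$ contains a complemented isometric copy of $L_1[0,1]$, realized by passing to a subsequence $n_j\ge 2^j$ and identifying $f\in L_1$ with the sequence of level-$j$ dyadic integral vectors $(\int_{I_{j,k}} f\,dt)_{k=1}^{2^j}$, with the projection provided by the martingale reconstruction from these averages.

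For the direction ``$\Leftarrow$'': let $Y\subset X$ be isomorphic to $(\bigoplus_n l_\infty^n)_{l_1}$ with basis $\{e^n_k\}$, and extend its coordinate functionals to $f^n_k\in X^*$ by Hahn--Banach with uniformly bounded norms. Read block by block, the $\{f^n_k\}$, at least when restricted to $Y$, reproduce the structure of $(\bigoplus_n l_1^n)_{l_\infty}$: an $l_1^n$-block per level $n$ (by duality with $l_\infty^n$ in $Y$), combined $l_\infty$-fashion across levels (by pairing with vectors of $Y$ supported on a single level). Composing with the canonical $L_1\subset(\bigoplus l_1^n)_{l_\infty}$ described above yields a candidate complemented $L_1\subset X^*$: the embedding sends $h_{j,m}$ to a Haar-patterned $\pm1$-combination of the $f^{n_j}_k$'s, and the projection is assembled from pairing with the dual basis of $Y$ followed by martingale reconstruction. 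One must verify that the extended combinations still have the right norm in $X^*$, not merely in $Y^*$, which is where the Hahn--Banach choices have to be orchestrated carefully.

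For the direction ``$\Rightarrow$'': let $T:L_1\hookrightarrow X^*$ and let $P:X^*\to T(L_1)$ be a bounded projection, and set $f_{j,m}:=Th_{j,m}$ for the Haar basis of $L_1$; on each level $j$ these functionals span (up to isomorphism) an $l_1^{2^j}$-block. I would then construct biorthogonal vectors $y_{j,m}\in X$: the finite-dimensional duality between $l_1^{2^j}$ and $l_\infty^{2^j}$ gives approximate biorthogonal elements in $X^{**}$, pulled back to $X$ by the principle of local reflexivity at each level, with the projection $P$ used to enforce $l_1$-sum decoupling across levels. The closed linear span of $\{y_{j,m}\}$ in $X$ then turns out to be isomorphic to $(\bigoplus_j l_\infty^{2^j})_{l_1}$, and hence to $(\bigoplus_n l_\infty^n)_{l_1}$ by a diagonal embedding.

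The main obstacle lies in the ``$\Rightarrow$'' direction: simultaneously lifting the biorthogonal vectors from $X^{**}$ into $X$ across all levels in a coherent way. Level-by-level local reflexivity produces $l_\infty^{2^j}$ inside $X$ for each $j$, but the challenge is assembling these finite-dimensional copies into a genuine $l_1$-sum using the complementation $P$. I expect the argument to require a diagonalization in which each level's biorthogonal system is chosen in view of prior levels and the action of $P$, together with a norm verification using test functionals in the range of $P$ to certify the $l_1$-sum decoupling across levels.
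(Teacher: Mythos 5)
The paper offers no proof of this statement: it is imported verbatim from Hagler and Stegall \cite{HS}, so there is no internal argument to compare against, and the benchmark is the original proof, which occupies a substantial part of that paper. Your proposal is a plan rather than a proof: the two steps you explicitly defer as ``obstacles'' are precisely where the entire content of the theorem lives, and as written both directions have genuine gaps.

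In the ``$\Leftarrow$'' direction, Hahn--Banach extensions $f^n_k$ of the coordinate functionals give only $\|\sum_{n,k}a^n_k f^n_k\|_{X^*}\le C\sum_n\sum_k|a^n_k|$, an $l_1$-sum upper bound, whereas boundedness of your embedding $J\colon L_1\to X^*$ requires the $\bigl(\bigoplus_n l_1^n\bigr)_{l_\infty}$ bound $C\sup_n\sum_k|a^n_k|$; nothing in the proposal produces extensions that are coherent across levels (the obvious repair, taking weak-$*$ limits of single-level blocks, fails because those blocks tend weak-$*$ to zero against $Y$), and arranging this coherence is the heart of the matter. A smaller but real gap: the ``martingale reconstruction'' projection of $\bigl(\bigoplus_n l_1^n\bigr)_{l_\infty}$ onto the dyadic copy of $L_1$ is not an $L_1$-limit, since a bounded sequence of level-$j$ step functions need not converge in $L_1$ (it may converge only to a singular measure); one must take a weak-$*$ cluster point in $M[0,1]$ along an ultrafilter and then apply the band projection onto the absolutely continuous part. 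In the ``$\Rightarrow$'' direction, local reflexivity does yield uniformly complemented $l_\infty^{2^j}$'s in $X$ level by level, but gluing them into a genuine $l_1$-sum inside $X$ --- the ``decoupling across levels'' you postpone --- is the combinatorial core of Hagler and Stegall's tree/selection argument, and no mechanism for it is supplied. In short, the skeleton (the duality $\bigl(\bigl(\bigoplus_n l_\infty^n\bigr)_{l_1}\bigr)^*\cong\bigl(\bigoplus_n l_1^n\bigr)_{l_\infty}$ and the dyadic embedding of $L_1$) is correctly identified, but the proposal does not constitute a proof; for the purposes of this paper the theorem should simply be cited from \cite{HS}.
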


Note that $(L_1 + L_{\infty}^0)_{{\big |} [0, 1]} = L_1[0, 1]$, and hence $L_1 + L_{\infty}^0$ contains a complemented copy of $L_1[0, 1]$, 
and so of $l_1$. Moreover, its subspace 
\begin{equation}
\left\{ \sum_{k=1}^{\infty} c_k \chi_{[k-1, k]}\colon c_k \rightarrow 0 ~ {\rm as} ~ k \rightarrow \infty \right\}
\end{equation}
is isomorphic to $c_0$ and so, by Sobczyk theorem (cf. \cite[Theorem 2.5.8]{AK}), is complemented in the separable space $L_1 + L_{\infty}^0$.  
Therefore, the latter space contains uniformly complemented copies of $l_{\infty}^n, n \in \mathbb N$. However, we have
 
\begin{theorem} \label{Thm3} The space $L_1 + L_{\infty}^0$ does not contain any subspace isomorphic to the space 
$(\bigoplus_{n=1}^{\infty} l_\infty^n)_{l_1}$.
\end{theorem}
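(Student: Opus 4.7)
The plan is to argue by contradiction. Suppose $T : (\bigoplus_{n=1}^{\infty} l_\infty^n)_{l_1} \to L_1 + L_\infty^0$ is an isomorphic embedding with $\|T\|\,\|T^{-1}\| \le C$, and set $u_k^n := T(e_k^n)$, where $\{e_k^n\}_{1\le k \le n,\,n\ge 1}$ denotes the canonical basis of the domain. Two structural properties transfer: (i) \emph{sign-invariance within each block}, namely $\|\sum_{k=1}^n \varepsilon_k u_k^n\|_{L_1+L_\infty^0}\le C$ for every $\varepsilon \in \{-1,+1\}^n$; (ii) \emph{$l_1$-equivalence across blocks}, namely the sums $g_n := \sum_{k=1}^n u_k^n$ satisfy $C^{-1}\sum_n|a_n|\le \|\sum_n a_n g_n\|_{L_1+L_\infty^0}\le C\sum_n|a_n|$. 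In particular each family $(u_k^n)_{k=1}^n$ spans inside $L_1+L_\infty^0$ a $C^2$-isomorphic copy of $l_\infty^n$.

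The essential analytic tool is the formula
\[
\|x\|_{L_1+L_\infty^0} \;=\; \int_0^1 x^*(t)\,dt \;=\; \sup_{m(E)\le 1}\int_E |x|\,dm,
\]
which follows from (\ref{1}) for $p=1$. Because this norm is order continuous on $L_1+L_\infty^0$ (the truncated integral $\int_0^1 x_n^*$ obeys dominated convergence), a Kadec--Pe{\l}czy{\'n}ski-type disjointification argument applies: after passing to a subsequence, I would write $g_n = v_n + w_n$ with $(v_n)$ relatively weakly compact (equi-integrable in the symmetric-norm sense) and the $w_n$ pairwise disjointly supported, $\liminf \|w_n\| > 0$. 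Since equi-integrable sequences cannot carry $l_1$-equivalence, the lower $l_1$-bound from (ii) must persist for the disjoint part $(w_n)$.

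The decisive step, and the main obstacle, is to rule out such a disjoint $l_1$-sequence coming from the sign-symmetric block structure of the $u_k^n$. For pairwise disjointly supported functions, the rearrangement $(\sum_n a_n w_n)^*$ is computed by interleaving, and an analysis of the regimes of the support measures shows either $c_0$-type behaviour (when the supports have measure at least $1$, so that the integral over $[0,1]$ sees only the top block) or forces each $g_n$ to be concentrated on a set of measure less than $1$; in that latter regime, property (i) applied with Rademacher averages, together with the Hardy--Littlewood-type identification $\|\cdot\|_{L_1+L_\infty^0} = \sup_{m(E)\le 1}\int_E|\cdot|$, caps $\|g_n\|$ below the $l_1$-lower bound required by (ii). The principal technical challenge is to couple the within-block sign-cancellations with the across-block disjointness cleanly, since the individual $u_k^n$ need not become disjoint even when the $g_n$'s do; this is expected to require an iterative selection and careful bookkeeping of cumulative measures of the disjoint supports.
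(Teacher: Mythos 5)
There is a genuine gap, and you have in fact flagged it yourself: the ``decisive step'' that you describe as the main obstacle is precisely the content of the paper's proof, and your sketch of it does not close. The paper's argument hinges on a quantitative anti-concentration estimate for the \emph{individual} basis vectors: for every $\delta>0$ there is $M=M(\delta)$ such that for every $n$ and every set $E$ with $m(E)\le 1$, at most $M$ indices $k\in\{1,\dots,n\}$ satisfy $\int_E|x_k^n|\,dm\ge\delta$ (estimate (\ref{7}), proved by averaging over signs, Khintchine's inequality in $L_1$ and Minkowski's inequality, which yield $\frac{\delta}{\sqrt 2}\sqrt{\operatorname{card}A}\le C$), together with its refinement (\ref{9}) in which each index is allowed its own small set $F_k$. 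Since each block has $n\gg M$ elements, a pigeonhole selection produces one ``flat'' coordinate $x_{k_i}^{n_i}$ per block; these are then perturbed into pairwise disjoint $y_i$ that are still equivalent to the $l_1$-basis by (\ref{5}), and the formula $\|\sum_{i=l}^m y_i\|=\sum_i\int_{E_i}|y_i|$ with $\sum_i m(E_i)\le 1$ forces most $E_i$ to be small, hence most terms to be at most $\delta_l$, contradicting the $l_1$ lower bound. None of this survives your reduction to the block sums $g_n=\sum_{k=1}^n u_k^n$: the sign-invariance (i) controls, via Khintchine, the square function $\int_E\bigl(\sum_k|u_k^n|^2\bigr)^{1/2}$, not the single sign choice $g_n$, so it gives no useful bound on $\int_E|g_n|$ for small $E$; flatness is a property one \emph{selects} coordinate by coordinate, not one inherited by the block sum.

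A second, related problem is your claim that the disjoint part $(w_n)$ of a Kadec--Pe{\l}czy\'nski decomposition of an $l_1$-sequence must be ruled out on general grounds. Disjointly supported normalized sequences in $L_1+L_\infty^0$ \emph{can} be equivalent to the unit vector basis of $l_1$ (take $y_n=2^n\chi_{[a_n,a_n+2^{-n}]}$ with disjoint supports of total measure at most $1$; then $\|\sum a_ny_n\|=\sum|a_n|$), so disjointness alone yields no contradiction. What kills the disjoint sequence in the paper is exactly the flatness $\int_F|y_j|\le\delta_i$ for $m(F)\le 1/(M_i+1)$ carried over from the pigeonhole selection. Without establishing that property --- which requires working with individual coordinates $u_k^n$ and the cardinality bounds (\ref{7}) and (\ref{9}) --- the argument does not reach a contradiction, and your proposal as written remains a correct identification of the ingredients rather than a proof.
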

\begin{proof} On the contrary, assume that $L_1 + L_{\infty}^0$ contains a subspace isomorphic to $(\bigoplus_{n=1}^{\infty} l_\infty^n)_{l_1}$. 
Let $x_k^n, n \in {\mathbb N}, k = 1, 2, \ldots, n$, form the sequence from $L_1 + L_{\infty}^0$ equivalent to the unit vector basis of 
$(\bigoplus_{n=1}^{\infty} l_\infty^n)_{l_1}$. This means that there is a constant $C > 0$ such that for all $a_k^n \in \mathbb R$ 
\begin{equation*}
C^{-1} \sum_{n=1}^{\infty} \max_{k = 1, 2, \ldots, n} |a_k^n| \leq \Big\|  \sum_{n=1}^{\infty}  \sum_{k=1}^n a_k^n x_k^n \Big\|_{L_1 +L_{\infty}} 
\leq C \sum_{n=1}^{\infty} \max_{k = 1, 2, \ldots, n} |a_k^n|.
\end{equation*}
In particular, for any $n \in \mathbb N$, every subset $A \subset \{1, 2, \ldots, n\}$ and all $\varepsilon_k = \pm 1, k \in A$, we have
\begin{equation} \label{4}
\Big\| \sum_{k \in A} \varepsilon_k x_k^n \Big\|_{L_1 +L_{\infty}} = \int_0^1 \Big(\sum_{k \in A} \varepsilon_k x_k^n\Big)^*(s) \, ds \leq C,
\end{equation}
and for all $1 \leq k(n) \leq n, n \in \mathbb N$ the sequence $\{x_{k(n)}^n \}_{n=1}^{\infty}$ is equivalent in $L_1+L_\infty$ to the unit vector basis of $l_1$, i.e., 
for all $a_n \in \mathbb R$
\begin{equation} \label{5}
C^{-1} \sum_{n=1}^{\infty} |a_n| \leq \int_0^1 \Big(\sum_{n=1}^{\infty} a_n x_{k(n)}^n\Big)^*(s) \, ds \leq C \sum_{n=1}^{\infty} |a_n|.
\end{equation}
Moreover, we can assume that $\| x_k^n \|_{L_1 +L_{\infty}} = 1$ for all $n \in {\mathbb N}, k = 1, 2, \ldots, n$, i.e.,
\begin{equation} \label{6}
\int_0^1 (x_k^n )^*(s)\, ds = 1, n \in {\mathbb N}, k = 1, 2,\ldots, n.
\end{equation}

Firstly, we show that for every $\delta > 0$ there is $M = M(\delta) \in \mathbb N$ such that for all $n \in \mathbb N$ and any 
$E \subset (0, \infty)$ with $m(E) \leq 1$ we have
\begin{equation} \label{7}
{\rm card} \{k = 1, 2, \ldots, n \colon \int_E |x_k^n(s)| ds \geq \delta \} \leq M.
\end{equation}
Indeed, assuming the contrary, for some $\delta_0 > 0$ we can find $n_i \uparrow, E_i \subset (0, \infty), m(E_i) \leq 1, i = 1, 2, \ldots$, 
such that
\begin{equation*}
{\rm card} \{k = 1, 2, \ldots, n_i \colon \int_{E_i} |x_k^{n_i}(s)| ds \geq \delta_0 \}  \rightarrow \infty .
\end{equation*}
Denoting $A_i:= \{k = 1, 2, \ldots, n_i \colon  \int_{E_i} |x_k^{n_i}(s)| ds \geq \delta_0 \}$, for all $\varepsilon_k = \pm 1$ we have
\begin{equation} \label{8}
\Big\| \sum_{k \in A_i} \varepsilon_k x_k^{n_i} \Big\|_{L_1 +L_{\infty}} = \int_0^1 \Big(\sum_{k \in A_i} \varepsilon_k x_k^{n_i}\Big)^*(s) \, ds 
\geq  \int_{E_i} \Big|\sum_{k \in A_i} \varepsilon_k x_k^{n_i}(s) \Big|\, ds.
\end{equation}
Moreover, by the Fubini theorem, Khintchine's inequality in $L_1$ (cf. \cite[pp. 50-51]{LT} or \cite{Sz}) and Minkowski inequality, 
we obtain
\begin{eqnarray*}
\int_0^1 \int_{E_i} \Big| \sum_{k \in A_i} r_k(t) x_k^{n_i}(s) \Big|\, ds\, dt 
&=&
\int_{E_i} \int_0^1 \Big| \sum_{k \in A_i} r_k(t) x_k^{n_i}(s) \Big|\, dt\, ds \\
&\geq& 
\frac{1}{\sqrt{2}} \int_{E_i} \Big(\sum_{k \in A_i} |x_k^{n_i}(s) |^2\Big)^{1/2} \, ds \\
&\geq& 
\frac{1}{\sqrt{2}} \left( \sum_{k \in A_i} \Big( \int_{E_i} |x_k^{n_i}(s) | \, ds\Big)^2 \right)^{1/2} 
\geq \frac{\delta_0}{\sqrt{2}} \sqrt{{\rm card} A_i}.
\end{eqnarray*}
Therefore, for each $i \in \mathbb N$ there are signs $\varepsilon_k(i), k \in A_i$ such that
\begin{equation*}
 \int_{E_i} \Big| \sum_{k \in A_i} \varepsilon_k(i) x_k^{n_i}(s) \Big|\, ds \geq  \frac{\delta_0}{\sqrt{2}} \sqrt{{\rm card} A_i}.
\end{equation*}
Combining this with (\ref{8}) we obtain that
\begin{equation*} 
\Big\| \sum_{k \in A_i} \varepsilon_k(i) x_k^{n_i} \Big\|_{L_1 +L_{\infty}}  \geq  \frac{\delta_0}{\sqrt{2}} \sqrt{{\rm card} A_i}, i = 1, 2, \ldots .
\end{equation*}
Since ${\rm card} A_i \rightarrow \infty$ as $i \rightarrow \infty$, the latter inequality contradicts (\ref{4}). Thus, (\ref{7}) is proved.

Now, we claim that for all $\delta > 0$ and $n \in \mathbb N$ 
\begin{eqnarray} \label{9}
{\rm card} \Big {\{}k = 1, 2, \ldots, n \colon & {\rm there ~is} ~
F \subset [0, \infty) ~{\rm such ~that} ~m(F) \leq \frac{1}{M+1}  \nonumber \\
&~{\rm and} ~ \int_F |x_k^n(s)| \, ds \geq \delta \Big {\}} \leq M,
\end{eqnarray}
where $M$ depending on $\delta$ is taken from (\ref{7}).

Indeed, otherwise, we can find $\delta' > 0, n_0 \in \mathbb N$ and $I \subset \{1, 2, \ldots, n_0\}$, card$I = M_0 + 1, M_0 = M(\delta_0),$
such that for every $k \in I$ there is $F_k \subset (0, \infty)$ with 
$$
m(F_k) \leq \frac{1}{M_0+1} ~~  {\rm and} ~~ \int_{F_k} |x_k^{n_0}(s)| \, ds \geq \delta'.
$$ 
Setting $E = \bigcup_{k \in I} F_k$, we see that $m(E) \leq \sum_{k \in I} m(F_k) \leq 1$.
Moreover, by the definition of $I$ and $E$,  
\begin{equation*} 
{\rm card} \{k = 1, 2, \ldots, n_0 \colon \int_E  |x_k^{n_0}(s)| \, ds \geq \delta' \} \geq {\rm card} I > M_0,
\end{equation*}
which is impossible because of (\ref{7}).

Now, we construct a special sequence of pairwise disjoint functions, which is equivalent in $L_1 + L_{\infty}^0$ 
to the unit vector basis in $l_1$. By (\ref{7}), for arbitrary $\delta_1 > 0$ there is $M_1 = M_1(\delta_1)  \in \mathbb N$ 
such that for all $n \in \mathbb N$
$$
{\rm card} \{k = 1, 2, \ldots, n \colon \int_0^1  |x_{k_1}^{n_1}(s)| \, ds \geq \delta_1 \} \leq M_1. 
$$
Therefore, taking $n_1 > 2 M_1$, we can find $k_1 = 1, 2, \ldots, n_1$ satisfying
$$
\int_0^1  |x_{k_1}^{n_1}(s)| \, ds < \delta_1
$$
and, by (\ref{9}), such that from $F \subset (0, \infty)$ with $m(F) \leq \frac{1}{M_1+1}$ it follows that 
$$
\int_F  |x_{k_1}^{n_1}(s)| \, ds < \delta_1.$$

Moreover, recalling (\ref{2}) we have $(x_{k_1}^{n_1})^*(t) \rightarrow 0$ as $t \rightarrow \infty$. Therefore, 
since $x_{k_1}^{n_1}\in L_1+L_\infty$ and any measurable function is equimeasurable with its
decreasing rearrangement, there exists $m_1 \in \mathbb N$ such that 
$\| x_{k_1}^{n_1} \chi_{[m_1, \infty)} \|_{L_1 + L_{\infty}} \leq \delta_1$. Then, setting $y_1:= x_{k_1}^{n_1} \chi_{[1, m_1]}$, 
we have
\begin{equation*} 
\| x_{k_1}^{n_1} - y_1 \|_{L_1 + L_{\infty}} \leq 2 \delta_1.
\end{equation*}
Next, by (\ref{7}), for arbitrary $\delta_2 > 0$ there is $M_2 = M_2(\delta_2) \in \mathbb N$ such that for all $n \in \mathbb N$ and 
$j = 1, 2, \ldots, m_1$ 
$$
{\rm card} \{k = 1, 2, \ldots, n \colon \int_{j - 1}^j  |x_k^n(s)| \, ds \geq \delta_2 \} \leq M_2. 
$$
Let $n_2 \in \mathbb N$ be such that $n_2 > M_2 m_1 + M_2 + M_1$. Then, by the preceding inequality and (\ref{9}), there is 
$1 \leq k_2 \leq n_2$ such that for all $j = 1, 2, \ldots, m_1$ we have 
\begin{eqnarray} \label{e1}
 \int_{j - 1}^j  |x_{k_2}^{n_2}(s)| \, ds \leq \delta_2, 
\end{eqnarray}
and from $F \subset (0, \infty)$ with $m(F) \leq \frac{1}{M_i+1}, i = 1, 2$, it follows that
\begin{equation*}
 \int_F |x_{k_2}^{n_2}(s)| \, ds \leq \delta_i.
\end{equation*}
Note that \eqref{e1} implies $\int_0^{m_1}  |x_{k_2}^{n_2}(s)| \, ds \leq m_1 \delta_2$, whence 
\begin{equation*} 
\| x_{k_2}^{n_2} \chi_{[0, m_1]} \|_{L_1 + L_{\infty}} \leq m_1 \delta_2.
\end{equation*}

As above, by (\ref{2}), there is $m_2 > m_1$ such that $\| x_{k_2}^{n_2} \chi_{[m_2, \infty)} \|_{L_1 + L_{\infty}} \leq m_1 \delta_2$. 
Thus, putting $y_2:= x_{k_2}^{n_2} \chi_{[m_1, m_2]}$, we have
\begin{equation*} 
\| x_{k_2}^{n_2} - y_2 \|_{L_1 + L_{\infty}} \leq 2 m_1 \delta_2.
\end{equation*}
Continuing this process, for any $\delta_3 > 0$, by (\ref{7}), we can find $M_3 \in \mathbb N$ such that for all $n \in \mathbb N$ and 
$j = 1, 2, \ldots, m_2$ it holds
$$
{\rm card} \{k = 1, 2, \ldots, n \colon \int_{j - 1}^j  |x_k^n(s)| \, ds \geq \delta_3 \} \leq M_3. 
$$
So, again, applying (\ref{9}) and taking $n_3 > m_2M_3 + M_1 + M_2 + M_3$ we find $1 \leq k_3 \leq n_3$ such that 
$$ \int_{j - 1}^j  |x_{k_3}^{n_3}(s)| \, ds \leq \delta_3,\;\; j = 1, 2, \ldots, m_2,$$ and
\begin{equation*} 
\int_F |x_{k_3}^{n_3}(s)| \, ds \leq \delta_i, 
\end{equation*}
whenever $m(F) \leq \frac{1}{M_i+1}, i = 1, 2, 3$. This implies that $\int_0^{m_2} |x_{k_3}^{n_3}(s)| \, ds \leq m_2 \delta_3$, and so
\begin{equation*} 
\| x_{k_3}^{n_3} \chi_{[0, m_2]} \|_{L_1 + L_{\infty}} \leq m_2 \delta_3.
\end{equation*}
Choosing $m_3 > m_2$ so that $\| x_{k_3}^{n_3} \chi_{[m_3, \infty)} \|_{L_1 + L_{\infty}} \leq m_2 \delta_3$ and setting 
$y_3:=  x_{k_3}^{n_3} \chi_{[m_2, m_3]}$, we obtain
\begin{equation*} 
\| x_{k_3}^{n_3} - y_3 \|_{L_1 + L_{\infty}} \leq 2 m_2 \delta_3.
\end{equation*}
As a result, we get the increasing sequences $n_i, m_i, k_i$ of natural numbers, $1 \leq k_i \leq n_i, i = 1, 2, \ldots$ and the sequence 
$\{y_i\}$ of pairwise disjoint functions from $L_1 +L_{\infty}^0$ such that
\begin{equation*} 
\| x_{k_i}^{n_i} - y_i \|_{L_1 + L_{\infty}} \leq 2 m_{i-1} \delta_i,
\end{equation*}
where $m_0:= 1$. Noting that the sequence of positive reals $\{ \delta_i\}_{i=1}^{\infty}$ can be chosen in such a way that the numbers 
$ m_{i - 1} \delta_i$ would be arbitrarily small, we can assume, by the principle of small perturbations (cf. \cite[Theorem~1.3.10]{AK})
and by inequalities (\ref{5}), 
that $\{y_i\}$ is equivalent in $L_1 +L_{\infty}$ to the unit vector basis of $l_1$.
Moreover, by construction, for all $j = 1, 2, \ldots$ and $i = 1, 2, 3, \ldots, j$ we have
\begin{equation} \label{10}
\int_F |y_j(s)| \, ds \leq \delta_i ~ {\rm whenever} ~ m(F) \leq \frac{1}{M_i + 1}.
\end{equation}

Let $1 \leq l < m$ be arbitrary. Since $y_i, i = 1, 2, \ldots$ are disjoint functions, then
\begin{equation}  \label{11}
\Big\| \sum_{i = l}^m y_i \Big\|_{L_1 + L_{\infty}} = \int_0^1 \Big(\sum_{i = l}^m y_i\Big)^*(s)\, ds = \sum_{i = l}^m \int_{E_i} |y_i(s)| \, ds,
\end{equation}
where $E_i$ are disjoint sets from $(0, \infty)$ such that $\sum_{i = l}^m m(E_i) \leq 1$. 
Clearly, for a fixed $l$ we have
$$
k_0(m):={\rm card} \{i\in\mathbb{N}: l\le i\le m\;\mbox{and}\; m(E_i) > \frac{1}{M_l + 1} \} \leq M_l + 1. 
$$
Hence, by (\ref{10}), (\ref{11}) and (\ref{6}), 
\begin{equation*} 
\Big\| \sum_{i = l}^m y_i \Big\|_{L_1 + L_{\infty}} \leq k_0(m) + (m - l - k_0(m)) \delta_l.
\end{equation*}
So, assuming that $m\ge (M_l+1)/\delta_l+l$, we obtain
\begin{equation*} 
\Big\| \sum_{i = l}^m y_i \Big\|_{L_1 + L_{\infty}} \leq 2 \delta_l (m-l).
\end{equation*}
Since $\delta_l \rightarrow 0$ as $l \rightarrow \infty$, the latter inequality contradicts the fact that $\{y_i\}$ is equivalent in $L_1+L_\infty$
to the unit vector basis of $l_1$. The proof is complete.
\end{proof}

\begin{remark} \label{Rem1} Since the space $L_p$, $1 \leq p < \infty$, is of Rademacher cotype $\max (p, 2)$,  the result of Theorem \ref{Thm3} can be generalized as follows: For every $1 \leq p < \infty$ the space $L_p +L_{\infty}^0$ does not contain any subspace isomorphic to the space $(\bigoplus_{n=1}^{\infty} l_\infty^n)_{l_p}$.
\end{remark}

\begin{proof}[Proof of Theorem \ref{Thm1} for $p = 1$]
By the Hagler-Stegall theorem \ref{Thm2}, Theorem \ref{Thm3} and the fact that $L_1 \cap L_{\infty} = (L_1 + L_{\infty}^0)^*$, we obtain 
that (in contrast to $L_1+L_\infty$) the space $L_1 \cap L_{\infty}$ does not contain a complemented subspace isomorphic to $L_1[0, 1]$, which gives our claim.
\end{proof}

There is a natural question (cf. also \cite[p. 28]{ALM}) if the space $L_1 +L_{\infty}$ is isomorphic to a dual space? Our guess is that
not, but we don't have a proof. Of course, the answer ``not"  would imply immediately the result of Theorem \ref{Thm1} for $p = 1$.

\begin{problem}
Is the space $L_1 +L_{\infty}$ isomorphic to a dual space? 
\end{problem}

\section{ $L_p \cap L_{\infty}$ for $p \neq 2$ does not contain a complemented subspace isomorphic to $L_p$}

The well-known Raynaud's result (cf. \cite[Theorem 4]{Ra}) presents the conditions under which a
separable symmetric space (on $[0,1]$ or on $(0,\infty)$) contains a complemented subspace isomorphic to $l_2$.
The following theorem can be regarded as its extension to a special class of nonseparable spaces.

\begin{theorem} \label{Thm4} Let $1 \leq p < \infty$. Then the space $L_p \cap L_{\infty}$ contains a complemented subspace isomorphic 
to the space $l_2$ if and only if $p = 2$.
\end{theorem}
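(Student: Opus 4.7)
The plan is to treat the two implications of Theorem~\ref{Thm4} separately. For the ``if'' direction ($p=2$), I would take $e_n:=\chi_{[n-1,n]}$, $n\in\mathbb{N}$, and observe that
\[ \Bigl\|\sum_n a_n e_n\Bigr\|_{L_2\cap L_\infty}=\max\Bigl(\Bigl(\sum_n a_n^2\Bigr)^{1/2},\ \sup_n |a_n|\Bigr)=\Bigl(\sum_n a_n^2\Bigr)^{1/2}, \]
so $[e_n]$ is isometric to $l_2$. Then the conditional-expectation operator $Px:=\sum_n\bigl(\int_{n-1}^n x(s)\,ds\bigr)\chi_{[n-1,n]}$ is a norm-one projection onto $[e_n]$: Cauchy--Schwarz on unit intervals gives $\|Px\|_{L_2}\le\|x\|_{L_2}$, and the bound $\|Px\|_{L_\infty}\le\|x\|_{L_\infty}$ is immediate.

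For the ``only if'' direction ($p\ne 2$), I would argue by contradiction: suppose $(x_n)\subset L_p\cap L_\infty$ is a normalized sequence equivalent to the unit vector basis of $l_2$ whose closed span is complemented in $L_p\cap L_\infty$. Since $\|x_n\|_{L_p}\le 1$ forces $x_n^*(t)\to 0$ as $t\to\infty$, each $x_n$ belongs to $L_p+L_\infty^0$, and I would run a Kadec--Pe{\l}czy\'nski-type dichotomy by a gliding-hump construction in the spirit of the proof of Theorem~\ref{Thm3}. \emph{Case A} (concentration): after passing to a subsequence and a small perturbation preserving the $l_2$-equivalence, each $x_n$ is supported in $[0,N]$ for some fixed $N$. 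Because the truncation $x\mapsto x\chi_{[0,N]}$ makes $L_\infty[0,N]\cong L_\infty$ a complemented subspace of $L_p\cap L_\infty$, restricting the given projection produces a complemented copy of $l_2$ inside $L_\infty$, contradicting the primeness of $L_\infty$ (\cite[Theorems~5.6.5 and 4.3.10]{AK}). \emph{Case B} (disjointification): after a further subsequence and perturbation one obtains pairwise disjointly supported $(y_k)$ in $L_p\cap L_\infty$ with $\|y_k\|_{L_p\cap L_\infty}=1$ whose span is still equivalent to $l_2$. Writing $\alpha_k:=\|y_k\|_{L_p}$ and $\beta_k:=\|y_k\|_{L_\infty}$, disjointness gives
\[ \Bigl\|\sum_k a_k y_k\Bigr\|_{L_p\cap L_\infty}=\max\Bigl(\Bigl(\sum_k|a_k|^p\alpha_k^p\Bigr)^{1/p},\ \sup_k|a_k|\beta_k\Bigr), \]
and $l_2$-equivalence is refuted by testing with $a_k=1$ for $k\le N$ (for $p>2$ the right-hand side is $O(N^{1/p})=o(\sqrt{N})$, violating the lower $l_2$-bound) or with $a_k=N^{-1/2}$ for $k\le N$ (for $p<2$, after a subsequence with $\alpha_k$ bounded below, the $L_p$-term is of order $N^{1/p-1/2}\to\infty$, violating the upper bound); the remaining subcase $\alpha_k\to 0$ is handled by the $L_\infty$-term $\sup_k|a_k|\beta_k$, since then $\sup_k|a_k|\ge c\bigl(\sum_k a_k^2\bigr)^{1/2}$ must hold, which already fails for $a_k=N^{-1/2}$, $k\le N$.

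The main obstacle will be carrying out the dichotomy rigorously in the non-separable space $L_p\cap L_\infty$: one must control the $L_p$-mass on long intervals and the $L_\infty$-tails at infinity simultaneously, and then invoke the principle of small perturbations (\cite[Theorem~1.3.10]{AK}) to preserve $l_2$-equivalence. The gliding-hump construction should run along the lines of---but be somewhat more delicate than---the one in the proof of Theorem~\ref{Thm3}, since here both the $L_p$- and the $L_\infty$-component of the norm need to be shrunk simultaneously at each step of the exhaustion.
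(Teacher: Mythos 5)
Your overall strategy is the same as the paper's: the $p=2$ direction via $\{\chi_{[n-1,n)}\}$, and for $p\ne 2$ a disjointification argument in which the spread-out part of the sequence is ruled out by computing norms of disjoint sums in $L_p\cap L_{\infty}$ and the concentrated part is ruled out inside $L_{\infty}$. Your Case~B computation (testing with $a_k=1$, $k\le N$, for $p>2$ and with $a_k=N^{-1/2}$ for $p<2$, with the subcase $\alpha_k\to 0$ absorbed by the $\sup$-term) is essentially identical to the paper's analysis of its disjoint functions $v_k$ and is correct, as is the $p=2$ direction.

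The genuine gap is that your two cases are neither exhaustive nor correctly delimited, and making them so is the actual content of the proof. First, the generic outcome of the gliding hump is a hybrid: each function splits as $g_k=u_k+v_k$ with the $v_k$ pairwise disjoint and the $u_k$ supported on a set of total measure at most $1$, and \emph{both} parts may be non-negligible. One must then prove that, because the $v_k$ cannot carry the lower $l_2$-estimate (your Case~B), the $u_k$ inherit the full $l_2$-equivalence \emph{and} the complementation; in the paper this is the passage from (28) to (35), which needs separate arguments for $p<2$ (showing $\|\sum c_kv_k\|\le C\sup_k|c_k|$) and for $p>2$ (via $\sum|c_k|^p\le(\sup_k|c_k|)^{p-2}\sum|c_k|^2$). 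Nothing in your sketch addresses this reduction. Second, your Case~A hypothesis ``supported in a fixed $[0,N]$'' is not the right concentration alternative: the functions can carry $L_{\infty}$-mass on small sets arbitrarily far out (the sets $B_k^i$ of the paper), so the concentrated part lives only on a set $E$ of measure at most $1$, not on a bounded interval; one then uses that $L_p\cap L_{\infty}(E)$ is isometric to $L_{\infty}(E)$ when $m(E)\le 1$. Third, the entry point of the whole construction --- producing a normalized sequence in the span converging to $0$ in measure on every bounded interval, so that the gliding hump can start --- is not automatic: the paper obtains it from Grothendieck's theorem (an infinite-dimensional subspace of $L_{\infty}[0,a]$ cannot be closed in $L_1[0,a]$), followed by the Bessaga--Pe{\l}czy\'nski selection principle and small perturbations to retain both the $l_2$-equivalence and the complementation. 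Your proposal omits this step entirely; without it neither of your cases can be reached.
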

\begin{proof} If $p = 2$, then clearly the sequence $\{\chi_{[n-1, n)}\}_{n=1}^{\infty}$ is equivalent in $L_2 \cap L_{\infty}$ to the unit vector basis 
of $l_2$ and spans a complemented subspace.

Let us prove necessity. 
On the contrary, let $\{x_n\} \subset L_p \cap L_{\infty}$ be a sequence equivalent in $L_p \cap L_{\infty}$ to the unit vector basis of $l_2$ so that 
$[x_n]$ is a complemented subspace of $L_p \cap L_{\infty}$.

Firstly, let us show that there is not $a > 0$ such that for all $c_k \in {\mathbb R}, k = 1, 2, \ldots$
\begin{equation*}
\Big\| \sum_{k=1}^{\infty} c_k x_k \chi_{[0, a]} \Big\|_{L_1} \asymp \Big\| \sum_{k=1}^{\infty} c_k x_k \Big\|_{L_p \cap L_{\infty}}.
\end{equation*}
Indeed, the latter equivalence implies
\begin{equation*}
\Big\| \sum_{k=1}^{\infty} c_k x_k \chi_{[0, a]} \Big\|_{L_1} \asymp \Big\| \sum_{k=1}^{\infty} c_k x_k \Big\|_{L_p \cap L_{\infty}[0, a]} \asymp \| (c_k) \|_{l_2}.
\end{equation*}
Since $L_p \cap L_{\infty}[0, a] = L_{\infty} [0, a]$, we see that the sequence $\{x_n \chi_{[0, a]} \}$ spans in both spaces $L_1[0, a]$ and 
$L_{\infty}[0, a]$ the same infinite-dimensional space. However, by the well-known Grothendieck's theorem (cf. \cite[Theorem 1]{Gr}; see 
also \cite[p. 117]{Ru91}) it is impossible. As a result, we can find a sequence $\{f_n\} \subset [x_k], \| f_n \|_{L_p \cap L_{\infty}} = 
1, n = 1, 2, \ldots$, such that for every $a > 0$
\begin{equation*}
\int_0^a | f_n(t)| \, dt \rightarrow 0 ~ {\rm as} ~ n \rightarrow \infty.
\end{equation*}
Hence, $f_n \overset{m}{\rightarrow } 0$ (convergence in Lebesgue measure $m$) on any interval $[0, a]$. Since $[x_k]$ spans $l_2$, 
then passing to a subsequence if it is necessary (and keeping the same notation), we can assume that $f_n \rightarrow 0$ weakly in 
$L_p \cap L_{\infty}$. Therefore, combining the Bessaga-Pe{\l}czy\'nski Selection Principle (cf. \cite[Theorem~1.3.10]{AK}) 
and the principle of small perturbations (cf. \cite[Theorem~1.3.10]{AK}), we can select a further subsequence, 
which is equivalent to the sequence $\{x_k\}$ in $L_p \cap L_{\infty}$ (and so to the unit vector basis in $l_2$) and which
spans a complemented subspace in $L_p \cap L_{\infty}$. Let it be denoted still by $\{f_n\}_{n=1}^{\infty}$. 
Now, we will select a special subsequence from $\{f_n\}$, which is equivalent to a sequence of functions whose supports intersect 
only over some subset of $(0, \infty)$ with Lebesgue measure at most $1$.

Let $\{\varepsilon_n\}_{n=1}^{\infty}$ be an arbitrary (by now) decreasing sequence of positive reals, $\varepsilon_1 < 1$. 
Since $f_n \overset{m}{\rightarrow } 0$ on $[0, 1]$, there is $n_1 \in \mathbb N$ such that
\begin{equation} \label{12}
m(\{t \in [0, 1] \colon |f_{n_1} (t)| > \varepsilon_1 \}) <  \varepsilon_1.
\end{equation}
Moreover, the fact that $\| f_{n_1} \chi_{(m, \infty)} \|_{L_p} \rightarrow 0$ as $m \rightarrow \infty$ allows us to find 
$m_1 \in \mathbb N$, for which
\begin{equation} \label{13}
\| f_{n_1} \chi_{[m_1, \infty)} \|_{L_p} \leq \varepsilon_2^2.
\end{equation}
Clearly, from (\ref{13}) it follows that
\begin{equation} \label{14}
m(\{t \in [m_1, \infty) \colon |f_{n_1} (t)| > \varepsilon_2 \}) \leq  \varepsilon_2.
\end{equation}
Denoting
$$
A_1:= \{t \in [0, 1]\colon |f_{n_1}(t)| > \varepsilon_1 \}, ~ B_1^0:= \{t \in [m_1, \infty) \colon  |f_{n_1}(t)| > \varepsilon_2\}
$$
and
$$
g_1:= f_{n_1} \left (\chi_{A_1} + \chi_{B_1^0} + \chi_{[1, m_1]} \right),
$$
from (\ref{12}), (\ref{13}) and (\ref{14}) we have
$$
\| f_{n_1} - g_1 \|_{L_p \cap L_{\infty}} \leq \varepsilon_1 + \max(\varepsilon_2, \varepsilon_2^2) \leq 2\, \varepsilon_1.
$$
Further, since $f_n \overset{m}{\rightarrow } 0$ on $[0, m_1]$, there exists $n_2 > n_1, n_2 \in \mathbb N$ such that
\begin{equation} \label{15}
m(\{t \in [0, m_1] \colon |f_{n_2} (t)| > \frac{\varepsilon_2}{m_1} \}) <  \varepsilon_2.
\end{equation}
Again, using the fact that $\| f_{n_2} \chi_{(m, \infty)} \|_{L_p} \rightarrow 0$ as $m \rightarrow \infty$, we can choose 
$m_2 > m_1$ in such a way that
\begin{equation} \label{16}
\| f_{n_2} \chi_{[m_2, \infty)} \|_{L_p} \leq \varepsilon_3^2.
\end{equation}
and also 
\begin{equation} \label{17}
m(B_1^1) < \varepsilon_3, ~ {\rm where} ~~  B_1^1:= B_1^0 \cap [m_2, \infty).
\end{equation}
From (\ref{16}), obviously, it follows that
\begin{equation} \label{18}
m(\{t \in [m_2, \infty) \colon |f_{n_2} (t)| > \varepsilon_3 \}) \leq  \varepsilon_3.
\end{equation}
Setting
$$
A_2:= \{t \in [0, m_1]\colon |f_{n_2}(t)| > \varepsilon_2 m_1^{-1/p} \}, ~ B_2^0:= \{t \in [m_2, \infty) \colon  
|f_{n_2}(t)| > \varepsilon_3\}
$$
and
$$
g_2:= f_{n_2} \left (\chi_{A_2} + \chi_{B_2^0} + \chi_{[m_1, m_2]} \right),
$$
by (\ref{15}), (\ref{16}) and (\ref{18}), we get
$$
\| f_{n_2} - g_2 \|_{L_p \cap L_{\infty}} \leq \max(\varepsilon_2 m_1^{-1/p}, \varepsilon_2) + \max(\varepsilon_3, \varepsilon_3^2) 
< 2\, \varepsilon_2.
$$
Let's do one more step. Since $f_n \overset{m}{\rightarrow } 0$ on $[0, m_2]$, there exists $n_3 > n_2, n_3 \in \mathbb N$ 
such that
\begin{equation} \label{19}
m(\{t \in [0, m_2] \colon |f_{n_3} (t)| > \varepsilon_3 m_2^{-1/p} \}) < \varepsilon_3.
\end{equation}
As above, we can choose $m_3 > m_2$ with the properties
\begin{equation} \label{20}
\| f_{n_3} \chi_{[m_3, \infty)} \|_{L_p} \leq \varepsilon_4^2,
\end{equation}
\begin{equation} \label{21}
m(B_1^2) < \varepsilon_4, ~ {\rm where} ~~  B_1^2:= B_1^0 \cap [m_3, \infty),
\end{equation}
and
\begin{equation} \label{22}
m(B_2^1) < \varepsilon_4, ~ {\rm where} ~~  B_2^1:= B_2^0 \cap [m_3, \infty).
\end{equation}
From (\ref{20}) we infer that
\begin{equation} \label{23}
m(\{t \in [m_3, \infty) \colon |f_{n_3} (t)| > \varepsilon_4 \}) \leq  \varepsilon_4.
\end{equation}
Finally, putting
$$
A_3:= \{t \in [0, m_2]\colon |f_{n_3}(t)| > \varepsilon_3 m_2^{-1/p} \}, ~ B_3^0:= \{t \in [m_3, \infty) \colon  
|f_{n_3}(t)| > \varepsilon_4\}
$$
and
$$
g_3:= f_{n_3} \left (\chi_{A_3} + \chi_{B_3^0} + \chi_{[m_2, m_3]} \right),
$$
by (\ref{19}), (\ref{20}) and (\ref{23}), we have
$$
\| f_{n_3} - g_3 \|_{L_p \cap L_{\infty}} \leq \max(\varepsilon_3 m_2^{-1/p}, \varepsilon_3) + \max(\varepsilon_4, \varepsilon_4^2) 
< 2\, \varepsilon_3.
$$
Continuing in the same way, we get the increasing sequences of natural numbers $\{n_k\}, \{m_k\}$, the sequences of sets 
$\{A_k\}_{k=1}^{\infty}, \{B_k^i\}_{i=0}^{\infty}, k = 1, 2, \ldots$ and the sequence of functions
$$
g_k := f_{n_k} \left (\chi_{A_k} + \chi_{B_k^0} + \chi_{[m_{k-1}, m_k]} \right),
$$
(where $m_0 = 1$), satisfying the properties
\begin{equation} \label{24}
m(A_k) \leq  \varepsilon_k, k = 1, 2, \ldots,
\end{equation}
\begin{equation} \label{25}
m(B_k^i) \leq  \varepsilon_{k + i + 1}, k = 1, 2, \ldots, i = 0, 1, 2, \ldots, 
\end{equation}
(see (\ref{17}), (\ref{21}) and (\ref{22})) and
$$
\| f_{n_k} - g_k \|_{L_p \cap L_{\infty}} \leq 2\, \varepsilon_k, k = 1, 2, \ldots.
$$
In particular, by the last inequality, choosing sufficiently small $ \varepsilon_k$, $k=1,2,\dots$, and applying once more 
the principle of small perturbations \cite[Theorem~1.3.10]{AK}, we may assume that the sequence $\{g_k\}$ is equivalent 
to $\{f_{n_k}\}$ (and so to the unit vector basis of $l_2$) and the subspace $[g_k]$ is complemented in $L_p \cap L_{\infty}$. 
Thus, for some $C > 0$ and all $(c_k) \in l_2$,  
\begin{equation} \label{26}
C^{-1} \| (c_k) \|_{l_2} \leq \Big\| \sum_{k=1}^{\infty} c_k g_k \Big\|_{L_p \cap L_{\infty}} \leq C \| (c_k) \|_{l_2}.
\end{equation}

Now, denote
$$
C_1:= \bigcup_{i=1}^{\infty} A_i \cup B_1^0, C_2:= \bigcup_{i=2}^{\infty} A_i \cup B_1^0 \cup B_2^0, 
C_3:= \bigcup_{i=3}^{\infty} A_i \cup B_1^1 \cup B_2^0 \cup B_3^0, \ldots
$$
$$
\ldots, C_j:= \bigcup_{i=j}^{\infty} A_i \cup B_1^{j-2} \cup B_2^{j-3} \cup \ldots \cup B_{j-2}^1 \cup B_{j-1}^0 \cup B_j^0, \ldots.
$$
Setting $C:= \bigcup_{j=1}^{\infty} C_j$ and applying (\ref{24}) and (\ref{25}), we have
\begin{equation} \label{27}
m(C) \leq \sum_{j=1}^{\infty} m(C_j) \leq \sum_{j=1}^{\infty} \Big(\sum_{i=j}^{\infty} \varepsilon_i + j \varepsilon_j\Big) \leq 1
\end{equation}
whenever $\varepsilon_k, k = 1, 2, \ldots,$ are sufficiently small. Putting
\begin{eqnarray*}
D_1 
&=& [1, m_1] \setminus \Big(\bigcup_{i = 2}^{\infty} A_i\Big), D_2 = [m_1, m_2] \setminus \Big(\bigcup_{i = 3}^{\infty} A_i \cup B_1^0\Big), \\
D_3 &=& [m_2, m_3] \setminus \Big(\bigcup_{i = 4}^{\infty} A_i \cup B_1^1 \cup B_2^0\Big), \ldots, \\
D_j &=& [m_{j-1}, m_j] \setminus \left( \bigcup_{i = j+1}^{\infty} A_i \cup B_1^{j-2} \cup B_2^{j-3} \cup \ldots \cup B_{j-2}^1 \cup B_{j-1}^0 \right), \ldots,
\end{eqnarray*}
and recalling the definition of $g_k, k = 1, 2, \ldots$, we infer that 
$$
g_k = u_k + v_k, ~ {\rm where} ~~ u_k:= g_k \chi_{C_k} ~ {\rm and} ~~ v_k:= g_k \chi_{D_k}, k = 1, 2, \ldots.
$$
Note that $(\bigcup_{k=1}^{\infty} C_k) \cap (\bigcup_{k=1}^{\infty} D_k) = \varnothing$, whence (\ref{26}) can be rewritten as follows
\begin{equation} \label{28}
\frac{1}{2} \, C^{-1} \| (c_k) \|_{l_2} \leq \max \left(\Big\| \sum_{k=1}^{\infty} c_k u_k \Big\|_{L_p \cap L_{\infty}}, 
\Big\| \sum_{k=1}^{\infty} c_k v_k \Big\|_{L_p \cap L_{\infty}} \right) \leq C \| (c_k) \|_{l_2}.
\end{equation}
Moreover, the subspace $[u_k]$ is also complemented in $L_p \cap L_{\infty}$ and, by (\ref{27}), we have
\begin{equation} \label{29}
m\Big(\bigcup_{k=1}^{\infty} {\rm supp} \, u_k\Big) \leq 1.
\end{equation}

Now, suppose that $\liminf_{k \rightarrow \infty} \| u_k \|_{L_p \cap L_{\infty}} = 0$. Then passing to a subsequence (and keeping the same 
notation), by (\ref{28}), we obtain
\begin{equation} \label{30}
\frac{1}{2 C}\,  \| (c_k) \|_{l_2} \leq \Big\| \sum_{k=1}^{\infty} c_k v_k \Big\|_{L_p \cap L_{\infty}} \leq C \| (c_k) \|_{l_2}.
\end{equation}
Since $v_k, k = 1, 2, \ldots$, are pairwise disjoint, we have
\begin{eqnarray} \label{31}
\Big\| \sum_{k=1}^{\infty} c_k v_k \Big\|_{L_p \cap L_{\infty}}
&=&
\max \left( \Big\| \sum_{k=1}^{\infty} c_k v_k \Big\|_{L_p}, \Big\| \sum_{k=1}^{\infty} c_k v_k \Big\|_{L_{\infty}} \right) \nonumber \\
&\asymp&
\max \left( \Big(\sum_{k=1}^{\infty} |c_k|^p \| v_k\|_{L_p}^p \Big)^{1/p}, \, \sup_{k \in \mathbb N} |c_k| \| v_k\|_{L_{\infty}} \right).
\end{eqnarray}
Firstly, let us assume that $1 \le p < 2$. If $\limsup_{k \rightarrow \infty} \| v_k\|_{L_p} > 0$, then selecting a further subsequence 
(and again keeping notation), we obtain the inequality
\begin{equation*}
\Big\| \sum_{k=1}^{\infty} c_k v_k \Big\|_{L_p \cap L_{\infty}} \geq c \, \Big(\sum_{k=1}^{\infty} |c_k|^p \Big)^{1/p},
\end{equation*}
which contradicts the right-hand estimate in (\ref{30}). So, $\lim_{k \rightarrow \infty} \| v_k\|_{L_p} = 0$, and then from (\ref{31}) 
for some subsequence of $\{v_k\}$ (we still keep notation) we have
\begin{equation} \label{32}
\Big\| \sum_{k=1}^{\infty} c_k v_k \Big\|_{L_p \cap L_{\infty}} \leq C_1 \, \sup_{k \in \mathbb N} |c_k|,
\end{equation}
and now the left-hand side of (\ref{30}) fails. Thus, if $1 \le p < 2$, inequality (\ref{30}) does not hold.

Let $p > 2$. Clearly, from (\ref{31}) it follows that
\begin{equation} \label{33}
\Big\| \sum_{k=1}^{\infty} c_k v_k\Big\|_{L_p \cap L_{\infty}} \leq C_2 \, \Big(\sum_{k=1}^{\infty} |c_k|^p \Big)^{1/p},
\end{equation}
and so the left-hand side estimate in (\ref{30}) cannot be true. Thus, (\ref{30}) fails for every $p \in [1, 2) \cup (2, \infty)$, and as a result 
we get
$$
\liminf_{k \rightarrow \infty} \| u_k \|_{L_p \cap L_{\infty}} > 0.
$$

Now, if $1 \le p < 2$, then, as above, $\lim_{k \rightarrow \infty}  \| v_k \|_{L_p} = 0$, and we come (for some subsequence of $\{v_k\}$) 
to inequality (\ref{32}). Clearly, 
\begin{equation} \label{34}
\Big\| \sum_{k=1}^{\infty} c_k u_k \Big\|_{L_p \cap L_{\infty}} \geq c \sup_{k \in \mathbb N} |c_k|,
\end{equation}
and from (\ref{32}) and (\ref{28}) it follows that for some $C > 0$ and all $(c_k) \in l_2$ we have
\begin{equation} \label{35}
C^{-1} \, \| (c_k) \|_{l_2} \leq \Big\| \sum_{k=1}^{\infty} c_k u_k \Big\|_{L_p \cap L_{\infty}} \leq C \| (c_k) \|_{l_2}.
\end{equation}
Therefore, the subspace $[u_k]$ is isomorphic in $L_p \cap L_{\infty}$ to $l_2$.

We show that the last claim holds also in the case $p > 2$. On the contrary, assume that the left-hand side of (\ref{35}) fails 
(note that the opposite side of (\ref{35}) follows from (\ref{28})). In other words, assume that there is a sequence 
$(c_k^n)_{k=1}^{\infty} \in l_2, n = 1, 2, \ldots$, such that $\|(c_k^n) \|_{l_2} = 1$ for all $n \in \mathbb N$ and
$$
\Big\| \sum_{k=1}^{\infty} c_k^n u_k \Big\|_{L_p \cap L_{\infty}} \rightarrow 0 ~ {\rm as} ~~ n \rightarrow \infty.
$$
Then, by (\ref{34}), we have $\sup_{k \in \mathbb N} |c_k^n| \rightarrow 0$ as $ n \rightarrow \infty$. Therefore, since
$$
\sum_{k=1}^{\infty} |c_k^n |^p \leq \big(\sup_{k \in \mathbb N} |c_k^n| \big)^{p-2} \sum_{k=1}^{\infty} |c_k^n |^2 = 
\big(\sup_{k \in \mathbb N} |c_k^n| \big)^{p-2},
$$
we have $\sum_{k=1}^{\infty} |c_k^n |^p \rightarrow 0$ as $ n \rightarrow \infty$. Combining this together with (\ref{33}), we 
obtain
$$
\Big\| \sum_{k=1}^{\infty} c_k^n v_k \Big\|_{L_p \cap L_{\infty}} \rightarrow 0 ~ {\rm as} ~~ n \rightarrow \infty,
$$
and so the left-hand estimate in (\ref{28}) does not hold. This contradiction shows that (\ref{35}) is valid for every 
$p \in [1, 2) \cup (2, \infty)$. Thus, the subspace $[u_k]$ is complemented in $L_p \cap L_{\infty}$ and isomorphic to $l_2$.
As an immediate consequence of that, we infer that $[u_k]$ is a complemented subspace of the space $L_p \cap L_{\infty}(E)$, 
where $E = \bigcup_{k=1}^{\infty} {\rm supp}\, u_k = \bigcup_{k=1}^{\infty} C_k$. Since by (\ref{29}) $m(E) \leq 1$, it follows that
$L_p \cap L_{\infty}(E)$ is isometric to $L_{\infty}(E)$. As a result we come to a contradiction, because $L_{\infty}$ does not contain 
any complemented reflexive subspace (cf. \cite[Theorem~5.6.5]{AK}).
\end{proof}

\begin{proof}[Proof of Theorem \ref{Thm1} for $p \in (1, 2) \cup (2, \infty)$]
Clearly, if $1<p<\infty$ then $L_p$ (and hence $L_p+L_\infty$) contains a complemented copy of $l_2$ (for instance, the span of the Rademacher sequence). 
Therefore, by applying Theorem~\ref{Thm4}, we complete the proof.
\end{proof}

Note that if $X$ is a symmetric space on $(0, \infty)$, then $X + L_{\infty}$ contains a complemented 
space isomorphic to $X[0, 1] = \{x \in X \colon {\rm supp}\, x \subset [0, 1] \}$ since 
$$
\| x \chi_{[0, 1]}\|_X \leq C_X \, \| x \|_{X+L_{\infty}} ~ {\rm for} ~ x \in X+L_{\infty},
$$
where $C_X \leq \max(2\, \| \chi_{[0, 1]}\|_X, 1)$. In fact, for $x \in X+L_{\infty}$, using estimate (4.2) from \cite[p. 91]{KPS}, we obtain
\begin{eqnarray*}
\| x \|_{X + L_{\infty}} 
&\geq&
\| x \chi_{[0, 1]} \|_{X + L_{\infty}} \geq \inf_{A \subset [0, 1]} (\| x \chi_{A}\|_X + \| x \chi_{[0, 1] \setminus A} \|_{L_{\infty}} ) \\
&\geq&
\inf_{A \subset [0, 1]} (\| x \chi_{A}\|_X + \frac{1}{2 \, \| \chi_{[0, 1]} \|_X} \| x \chi_{[0, 1] \setminus A} \|_X ) \geq \frac{1}{C_X} \| x \chi_{[0, 1]} \|_X.
\end{eqnarray*}
So, an inspection of the proofs of Theorems  \ref{Thm4} and \ref{Thm1} (in the case when $p \in (1, 2) \cup (2, \infty)$) shows 
that the following more general result is true. 

\begin{theorem} \label{Thm5} Suppose $X$ is a separable symmetric space on $(0, \infty)$ satisfying
either the upper $p$-estimate for $p > 2$ or lower $q$-estimate for $q < 2$.
Then the space $X \cap L_{\infty}$ does not contain any complemented subspace isomorphic to $l_2$.

If, in addition, the space $X[0, 1]$ contains a complemented subspace isomorphic to $l_2$, then
the spaces $X \cap L_{\infty}$ and $X + L_{\infty}$ are not isomorphic.
\end{theorem}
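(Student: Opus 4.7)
I plan to follow the proof of Theorem \ref{Thm4} essentially verbatim, tracking which ingredients use the $L_p$-structure and replacing them with the hypotheses on $X$. The parts that depend only on $X$ being a separable symmetric space on $(0,\infty)$ transfer with no essential change: the identification of $(X\cap L_{\infty})_{|[0,a]}$ with $L_{\infty}[0,a]$ up to equivalence of norms---which, via Grothendieck's theorem, forbids an infinite-dimensional subspace of $[x_k]$ from carrying equivalent $L_1[0,a]$- and $(X\cap L_{\infty})$-norms, and hence allows us to extract $f_n\in [x_k]$, $\|f_n\|_{X\cap L_{\infty}}=1$, converging to $0$ in measure on every bounded interval; the iterative truncation procedure producing a sequence $\{g_k\}$ still equivalent to the unit vector basis of $l_2$ and spanning a complemented subspace, with $g_k=u_k+v_k$, where $u_k$ is supported in a fixed set $E$ of Lebesgue measure at most $1$ and the $v_k$ are pairwise disjointly supported on $[1,\infty)$ (this uses Chebyshev in $X$ via the fundamental function $\phi_X$, together with $\|f\chi_{[m,\infty)}\|_X\to 0$ as $m\to\infty$, which holds by separability of $X$); and the basic-sequence estimate $\|\sum c_k u_k\|_{X\cap L_{\infty}}\ge c\sup_k |c_k|$, valid once $\liminf_k \|u_k\|_{X\cap L_{\infty}}>0$.

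The $L_p$-structure enters only in the final dichotomy for $\|\sum c_k v_k\|$. In the upper $p$-estimate case ($p>2$), that estimate applied to the disjoint, uniformly norm-bounded $v_k$ gives
\[
\Big\|\sum c_k v_k\Big\|_{X\cap L_{\infty}}\le C\Big(\sum |c_k|^p\Big)^{1/p},
\]
the exact analogue of inequality (\ref{33}); the argument of Theorem \ref{Thm4} for $p>2$ (test coefficients $c_k=n^{-1/2}$ for $k\le n$ and $0$ otherwise) then forces $[u_k]$ to be isomorphic to $l_2$. In the lower $q$-estimate case ($q<2$), the same one-sided estimate combined with the $l_2$-upper bound from the analogue of (\ref{28}) first forces $\|v_k\|_X\to 0$ along a subsequence (otherwise $(\sum|c_k|^q)^{1/q}$ would dominate the $l_2$-envelope); a further thinning so that $\sum\|v_k\|_X<\infty$, followed by the triangle inequality, yields $\|\sum c_k v_k\|_{X\cap L_{\infty}}\le C\sup_k|c_k|$, the analogue of (\ref{32}). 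In either case $[u_k]$ is complemented in $X\cap L_{\infty}$ and isomorphic to $l_2$, hence a complemented reflexive subspace of $(X\cap L_{\infty})(E)\cong L_{\infty}(E)$, contradicting the Lindenstrauss--Pe\l czy\'nski theorem.

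For the second assertion, the paragraph preceding the theorem shows $X[0,1]$ is complemented in $X+L_{\infty}$ via $x\mapsto x\chi_{[0,1]}$, so a complemented $l_2$-copy in $X[0,1]$ produces one in $X+L_{\infty}$, whereas the first part of the theorem denies this to $X\cap L_{\infty}$; hence $X+L_{\infty}$ and $X\cap L_{\infty}$ are not isomorphic.

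The main obstacle is the lower $q$-estimate case: the original proof of Theorem \ref{Thm4} for $p<2$ exploited the \emph{exact} disjoint-additivity of the $L_p$-norm---equivalently, also an upper $p$-estimate---to bound $\|\sum c_k v_k\|_{L_p}$ by $\sup_k|c_k|$ on a subsequence. With only a lower $q$-estimate assumed, one falls back on the triangle inequality, which still suffices because the lower $q$-estimate itself produces $\|v_k\|_X\to 0$ along a subsequence, after which thinning to $\sum\|v_k\|_X<\infty$ is free.
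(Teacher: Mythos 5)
Your proposal is correct and follows exactly the route the paper intends: the paper offers no separate proof of Theorem \ref{Thm5}, stating only that ``an inspection of the proofs of Theorems \ref{Thm4} and \ref{Thm1}'' yields the result, and your write-up is precisely that inspection, correctly isolating the two places where the $L_p$-structure is used (the disjoint-sum upper bound via the upper $p$-estimate for $p>2$, and the forcing of $\|v_k\|_X\to 0$ plus the triangle-inequality substitute for exact $p$-additivity via the lower $q$-estimate for $q<2$), together with the complementation of $X[0,1]$ in $X+L_\infty$ from the paragraph preceding the theorem. No discrepancies of substance.
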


\section{ Concluding remarks related to the spaces $L_2 + L_{\infty}$ and $L_2 \cap L_{\infty}$}

We do not know whether the spaces $L_2 + L_{\infty}$ and $L_2 \cap L_{\infty}$ are isomorphic or not. 

\begin{problem}
Are the spaces $L_2 + L_{\infty}$ and $L_2 \cap L_{\infty}$ isomorphic? 
\end{problem}

We end up the paper with the following remarks related to the above problem.

\begin{remark} \label{Rem2} 
The predual spaces $L_1 \cap L_2$ and $L_1 + L_2$ for $L_2 + L_{\infty}$ and $L_2 \cap L_{\infty}$, respectively, 
are not isomorphic. 
\end{remark}

In fact, $L_1 \cap L_2$ is a separable dual space since $(L_2 + L_{\infty}^0)^* = L_2 \cap L_1$ (cf. \cite[Proposition 2(a)]{Di88}). 
Therefore, the space $L_1[0, 1]$ cannot be embedded in this space (cf. \cite[p. 147]{AK}) but 
$L_1 + L_2$ has a complemented subspace isomorphic to $L_1[0, 1]$, which completes our observation. 

\begin{remark} \label{Rem3} 
Either of the spaces $L_2 + L_{\infty}$ and $L_2 \cap L_{\infty}$ is isomorphic to a (uncomplemented) subspace of $l_\infty$, and hence $L_2 + L_{\infty}$ is isomorphic to a subspace of $L_2 \cap L_{\infty}$ and vice versa.
\end{remark}

To see this, for instance, for $L_2 + L_{\infty}$, it is sufficient to take arbitrary dense sequence of the unit ball of the space $L_1 \cap L_2$, say, $\{\varphi_n\}_{n=1}^\infty$, and to set
$$
Tx:=\Big(\int_0^\infty x(t)\varphi_n(t)\,dt\Big)_{n=1}^\infty\;\;\mbox{for all}\;\;x\in L_2 + L_{\infty}.$$ 
It is easy to see that this mapping defines an isometrical embedding of $L_2 + L_{\infty}$ into $l_\infty$.


\noindent
{\footnotesize Department of Mathematics\\
Samara National Research University, Moskovskoye shosse 34\\
443086, Samara, Russia}\\
{\it E-mail address:} {\tt astash56@mail.ru} \\

\noindent
{\footnotesize Department of Mathematics, Lule\r{a} University of Technology\\
SE-971 87 Lule\r{a}, Sweden}\\
{\it E-mail address:} {\tt lech@sm.luth.se} \\

\end{document}